 \DeclareMathOperator{\perm}{Sym}
 \DeclareMathOperator{\frat}{Frat}
\DeclareMathOperator{\ssl}{SL}
  \DeclareMathOperator{\diam}{diam}
\DeclareMathOperator{\End}{End} 
\DeclareMathOperator{\dist}{dist}
\renewcommand{\emptyset}{\varnothing}
\newtheorem{thm}{Theorem}
 \newtheorem{lemma}[thm]{Lemma}
\newtheorem{prop}[thm]{Proposition}
\numberwithin{equation}{section}
\renewcommand{\footnote}{\endnote}
\newcommand{\ignore}[1]{}\makeglossary
\begin{document}
	\bibliographystyle{amsplain}
	\title[Generating graph]{The generating graph of a profinite  group}

	\author{Andrea Lucchini}
	\address{Andrea Lucchini\\ Universit\`a degli Studi di Padova\\  Dipartimento di Matematica \lq\lq Tullio Levi-Civita\rq\rq\\ Via Trieste 63, 35121 Padova, Italy\\email: lucchini@math.unipd.it}

	\begin{abstract} Let $G$ be 2-generated group. The generating graph $\Gamma(G)$ of $G$ is the graph whose
		vertices are the elements of $G$ and where two vertices $g$ and $h$ are adjacent if $G = \langle g, h \rangle.$ This definition can be extended to a 2-generated profinite group $G,$ considering in this case topological generation. We prove that the set $V(G)$ of non-isolated vertices of $\Gamma(G)$ is closed in $G$ and that, if $G$ is prosoluble, then the graph $\Delta(G)$ obtained from $\Gamma(G)$ by removing its isolated vertices is connected with diameter at most 3. However we construct an example of a 2-generated profinite group $G$ with the property that $\Delta(G)$ has $2^{\aleph_0}$ connected components. This implies that the so called \lq\lq swap conjecture\rq\rq \ does not hold for finitely generated profinite groups. We also prove that if an element of $V(G)$ has finite degree in the graph $\Gamma(G),$ then $G$ is finite. \end{abstract}
	\maketitle

\section{Introduction} Given a group $G,$ the generating graph $\Gamma(G)$ is the graph with vertex set $G$ where two elements $x$ and $y$ are adjacent if and only if $G=\langle x,y \rangle.$ There could be many isolated vertices in this graph. All of the elements in the
Frattini subgroup will be isolated vertices, but we can also find isolated vertices outside
the Frattini subgroup (for example, the  elements of the Klein subgroup are
isolated vertices in $\Gamma(\perm(4))).$ Several strong structural results about $\Gamma(G)$ are known in the case where $G$ is
simple, and this reflects the rich group theoretic structure of these groups. For example, if $G$ is a nonabelian simple group, then the only isolated vertex of $\Gamma(G)$ is the identity and the
graph $\Delta(G)$ obtained by removing the isolated vertex is connected with diameter two and, if $G$ is sufficiently large, admits a Hamiltonian cycle. In \cite{diam}, it is proved  that $\Delta(G)$ is connected, with diameter at most 3, if $G$ is a finite soluble group. 

\

Clearly the definitions of $\Gamma(G)$ and $\Delta(G)$ can be extended to the case of a {2-generated} profinite group $G$ (in this case we consider topological generation, i.e. we say that $X$ generates $G$ if the abstract subgroup generated by $X$ is dense in $G).$ We denote by $V(G)$ the set of the vertices of $\Delta(G)$. We will prove in section \ref{prelim} (see Proposition \ref{chiuso}) that $V(G)$ is a closed subset of $G.$ The profinite group
$G,$ being a compact topological group, can be seen as a probability space. If we denote
with $\mu(G)$ the normalized Haar measure on $G,$ so that $\mu(G) = 1,$ we may consider the probability $\mu(V(G))$ that a vertex of the generating graph $\Gamma(G)$ is non-isolated. By \cite[Remark 2.7(ii)]{scott}, if $G$ is a 2-generated pronilpotent group, then $\mu(V(G))\geq 6/\pi^2.$ However is it possible to construct a 2-generated prosoluble group $G$ with $\mu(V(G))=0.$ Indeed let $H = C_2^2$ and let $h_1$, $h_2$, $h_3$ be the nontrivial elements of $H$. For each odd prime $p,$  write $N_p = C_{p}^3$ and define
$
G = \left(\prod_{p \text { odd }}N_p \right) \rtimes H
$
where for each odd prime $p,$ the subgroup $N_p$ is $H$-stable and for $(n_{p,1},n_{p,2},n_{p,3}) \in N_p$ and $h_j \in H$
\[
(n_{p,1},n_{p,2},n_{p,3})^{h_j} = \left\{ \begin{array}{ll} (n_{p,1},n_{p,2}^{-1},n_{p,3}^{-1}) & \text{if $j=1$} \\ (n_{p,1}^{-1},n_{p,2},n_{p,3}^{-1}) & \text{if $j=2$} \\ (n_{p,1}^{-1},n_{p,2}^{-1},n_{p,3}) & \text{if $j=3$.} \end{array} \right.
\]  
The vertex $((n_{p,i})_{1\leq i\leq 3, p \text { odd} };h)$ is non-isolated in $\Gamma(G)$ if and only if $h=h_j$ for some $1 \leq j \leq 3$ and $n_{p,j} \neq 0$ for all $p$ (see \cite[Example 2.8]{scott} for more details). This implies
$$
\mu(V(G))=  \frac{3}{4} \prod_{p \text { odd}}\left(1-\frac{1}{p}\right)=0.
$$
The neighborhood  of a vertex $g$ of the generating graph of $G$, denoted by $\mathcal N_G(g),$ is
the set of vertices of $\Gamma(G)$ adjacent to $g$. The degree of $g$, denoted by $\delta_G(g),$ is the number of edges of $\Gamma(G)$ incident with $g.$ Since $G=\langle g, x\rangle$ if and only if there is no open maximal subgroup of $G$ containing $g$ and $x$, it follows  that
$$\mathcal N_G(g)=G\setminus \bigcup_{M\in \mathcal M_g} M,$$ where $\mathcal M_g$ is the set of the open maximal subgroups of $G$ containing $g.$ In particular $\mathcal N_G(g)$ is a closed subgroup of $V(G).$ The surprising result is that if  $\delta_G(g)$ is finite for some $g\in V(G),$ then $G$ is finite. Indeed we have:
\begin{thm}\label{infinito}
	Assume that $G$ is an infinite 2-generated profinite group. Then the degree of any non-isolated vertex in the generating graph $\Gamma(G)$ is infinite.
\end{thm}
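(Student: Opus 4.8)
The plan is to argue by contraposition: assume $g\in V(G)$ has finite degree $\delta_G(g)<\infty$, and deduce that $G$ is finite. The starting point is the identity recorded just above the statement, namely $\mathcal N_G(g)=G\setminus\bigcup_{M\in\mathcal M_g}M$, where $\mathcal M_g$ is the set of open maximal subgroups of $G$ containing $g$. Since $g\in V(G)$, the set $\mathcal N_G(g)$ is nonempty, and if it is finite then $G=\bigl(\bigcup_{M\in\mathcal M_g}M\bigr)\cup F$ for a finite set $F=\mathcal N_G(g)$. Now each open maximal subgroup $M$ has finite index in $G$, hence is closed of positive Haar measure $\mu(M)=1/|G:M|$, while the finite set $F$ has measure $0$; so $\mu\bigl(\bigcup_{M\in\mathcal M_g}M\bigr)=1$. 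The first key step is to leverage this: a profinite group cannot be written as a union of finitely many proper closed subgroups together with a null set unless... actually $\mathcal M_g$ could a priori be infinite, so the real content is to show $\mathcal M_g$ must be finite and then that each $M$ is forced to have small index.

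The main engine I would use is the following dichotomy for the neighbourhood $\mathcal N_G(g)$, which is a union of cosets of a certain structure. First I would show $\mathcal M_g$ is finite: if $\mathcal M_g$ were infinite, pick a descending chain of open normal subgroups $\{U_i\}$ with $\bigcap U_i=1$; infinitely many distinct $M\in\mathcal M_g$ survive in some finite quotient $G/U_i$ (since each finite quotient has only finitely many maximal subgroups, by pigeonhole infinitely many of the $M$ have pairwise distinct images only if... ) — more carefully, since all $M\supseteq\mathcal N_G(g)$ fail to contain the fixed finite complement, and $\bigcup M=G\setminus F$, passing to a quotient $G/U$ large enough that $F$ injects and separates, the image of $\bigcup M$ still misses $|F|$ points, so $G/U$ is a finite group whose non-generating-with-$g$ vertices number exactly $|F|$; this pins the structure of $G/U$ uniformly in $U$. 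The crucial quantitative point is then: in a finite group $\bar G=\langle\bar g,\cdot\rangle$, the number of elements $x$ with $\bar G=\langle\bar g,x\rangle$ is a multiple of $|\bar G|\cdot\prod(1-|\bar G:M|^{-1})$-type Eulerian expressions, and for this to stay bounded as $|\bar G|\to\infty$ one needs $|\bar G|$ itself bounded. Concretely, $\delta_{\bar G}(\bar g)\ge \varphi(|\bar G|/|\Phi|)$-style lower bounds — or better, one observes $\mathcal N_{\bar G}(\bar g)$ is invariant under multiplication by elements of a nontrivial subgroup when $\bar G$ is infinite-ish, forcing $|\mathcal N_{\bar G}(\bar g)|$ to grow.

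The cleanest route, which I would try to push through, is: let $N=\bigcap_{M\in\mathcal M_g}M$. Then $N\trianglelefteq G$ is closed, $\mathcal N_G(g)$ is a union of cosets of $N$ — no wait, it is the complement of a union of subgroups each containing $N$, hence it is $N$-invariant under right translation, so $\mathcal N_G(g)$ is a union of right cosets of $N$. Being finite, this forces $N$ to be open (finite index) and $|G:N|$ bounded in terms of $\delta_G(g)$. Then $G/N$ is a finite group in which the generating graph neighbourhood of $gN$ has size $\delta_G(g)/|N|\cdot$(something), and $N$ is an intersection of maximal subgroups of the finite-index-quotient, i.e. $N/\Phi$-type Frattini-like. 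It remains to rule out $N$ being infinite: but $N$ open finite index and $G$ profinite with $G/N$ finite does not make $G$ finite. So the final step must show $N$ itself is finite. Here I would use that $g\in V(G)$ together with $N=\bigcap\mathcal M_g$: every open maximal subgroup of $G$ containing $g$ contains $N$, and I claim every open maximal subgroup containing $N$ is in $\mathcal M_g$ — no, that is false in general.

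Let me restate the real plan. The hard part, and the heart of the theorem, is precisely the passage from "finite quotients behave boundedly" to "$G$ is finite", and I expect to handle it as follows. Choose open normal $N\trianglelefteq G$; in $\bar G=G/N$ the vertex $\bar g$ has $\delta_{\bar G}(\bar g)\le \delta_G(g)=:d$ (generation in the quotient is implied by generation in $G$, composed with the $|N|$-to-one... actually one must divide, so $\delta_{\bar G}(\bar g)\le d/1$ with care about fibres: the fibre over an edge $(\bar g,\bar x)$ has size at most $|N|^2$ but could be empty, and $\sum$ over a spanning set...). The clean inequality is: the number of $\bar x\in\bar G$ with $\bar G=\langle\bar g,\bar x\rangle$ is at most $d$, because any lift $x$ of such an $\bar x$ together with suitable coset adjustments generates $G$ — this uses that $\langle g,x\rangle$ dense in $G$ iff dense modulo every open normal subgroup, and a Frattini-type lifting argument. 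Granting $\delta_{\bar G}(\bar g)\le d$ for every finite quotient $\bar G$, it suffices to prove: a finite 2-generated group $\bar G$ with a vertex of degree $\le d$ in its generating graph has order bounded by a function of $d$. This last statement is the genuine obstacle; I would prove it via the solubility/non-solubility split — for $\bar G$ with a fixed vertex $\bar g$ generating with few partners, the number of $\bar x$ generating equals $|\bar G|\cdot P_{\bar g}$ where $P_{\bar g}=\prob(\langle\bar g,\bar x\rangle=\bar G)$ over random $\bar x$, and one shows $P_{\bar g}$ is bounded below by a positive constant once $|\bar G|$ is large (using that every proper quotient-by-chief-factor contributes a uniformly-bounded-away-from-1 factor, cf. the Eulerian function estimates of Gaschütz and the probabilistic generation literature), so $|\bar G|=\delta_{\bar G}(\bar g)/P_{\bar g}\le d/c$. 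With $|\bar G|\le d/c$ for all finite quotients $\bar G$, the group $G$ is an inverse limit of groups of bounded order, hence finite, completing the contrapositive.
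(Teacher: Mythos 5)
Your overall architecture is the right one and matches the paper's: reduce to finite quotients via a Gasch\"utz-type lifting lemma (so that distinct neighbours of $gN$ in $G/N$ lift to distinct neighbours of $g$ in $G$, giving $\delta_{G/N}(gN)\le\delta_G(g)$), and then derive a contradiction from a quantitative lower bound on degrees in finite groups. The lifting half of your argument is sound and is exactly Lemma \ref{estendo} in the paper. (Most of the earlier material in your write-up --- the measure argument, the attempt to show $\mathcal M_g$ is finite, the subgroup $N=\bigcap_{M\in\mathcal M_g}M$ --- consists of abandoned false starts and contributes nothing; note also that $\mathcal N_G(g)$ being finite does not make it a null set a priori, and your claim that it is a union of right cosets of $\bigcap_{M\in\mathcal M_g}M$ is wrong, since a complement of a union of subgroups containing $N$ need not be a union of $N$-cosets unless each $M$ is itself a union of such cosets on the relevant side and the union is too.)

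The genuine gap is in the step you yourself identify as ``the genuine obstacle.'' You propose to show that for a finite $2$-generated group $\bar G$ and $\bar g\in V(\bar G)$, the probability $P_{\bar g}$ that a random element generates with $\bar g$ is bounded below by a universal positive constant once $|\bar G|$ is large, so that $|\bar G|\le\delta_{\bar G}(\bar g)/c$. This is false. The paper's own introductory example, truncated to finitely many primes --- $\bigl(\prod_{p\le P}C_p^3\bigr)\rtimes C_2^2$ --- is a family of $2$-generated finite soluble groups in which the proportion of elements adjacent to a fixed non-isolated vertex is $O\bigl(\prod_{p\le P}(1-1/p)\bigr)\to 0$. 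Your proposed justification does not even yield your claim: if each of $t$ chief factors contributes a factor ``uniformly bounded away from $1$,'' say each at most $1-\epsilon$, the product tends to $0$ as $t\to\infty$ rather than staying bounded below. The correct quantitative statement, which is the real content of the paper's Theorems \ref{tanti} and Lemma \ref{doppio}, concerns the absolute count rather than the density: $\delta_{\bar G}(\bar g)$ factors as a product over the chief factors of $\bar G$ (equation (\ref{prodo})), each factor is at least $2$ except for at most two non-Frattini chief factors of order $2$, and hence $\delta_{\bar G}(\bar g)\ge 2^{t-2}$ where $t$ is the chief length. Since the chief length of the finite quotients of an infinite profinite group is unbounded, this tends to infinity even though the density $P_{\bar g}$ may tend to $0$, and that is what closes the argument. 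Without a result of this multiplicative type (which requires the abelian/non-abelian chief factor analysis, including Rowley's fixed-point theorem in the non-abelian case), your proof does not go through.
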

This is a consequence of the following result, concerning the generating graph  of a finite group $G.$
\begin{thm}\label{tanti}
	Let $G$ be a 2-generated finite group. If $g\in V(G),$ then $\delta_G(g)\geq 2^{t-2},$ where $t$ is the length of a chief series of $G.$
\end{thm}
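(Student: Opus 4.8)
The statement bounds the degree $\delta_G(g)$ of a non-isolated vertex $g$ from below by $2^{t-2}$, where $t$ is the chief length of $G$. The natural approach is induction on $t$ (equivalently on $|G|$), peeling off a minimal normal subgroup. Let $N$ be a minimal normal subgroup of $G$ and pass to $\bar G = G/N$; then $\bar g = gN$ lies in $V(\bar G)$ provided $g$ is still part of a generating pair modulo $N$ — and this needs a small argument, since a priori removing $N$ could turn a generating pair into a non-generating one. The key reduction is: because $g \in V(G)$, there is some $h$ with $G = \langle g, h\rangle$, and then $\bar G = \langle \bar g, \bar h\rangle$, so $\bar g \in V(\bar G)$. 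By induction $\delta_{\bar G}(\bar g) \ge 2^{t-3}$. The goal is then to show that each edge at $\bar g$ in $\Gamma(\bar G)$ "lifts" to enough edges at $g$ in $\Gamma(G)$ to at least double the count, giving $\delta_G(g) \ge 2 \cdot 2^{t-3} = 2^{t-2}$.

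**The lifting step.** Fix $h$ with $\bar G = \langle \bar g, \bar h\rangle$, i.e. $\langle g, h\rangle N = G$. I would analyze two cases according to whether $N$ is a Frattini chief factor or not. If $N \le \Phi(G)$, then in fact $\langle g, h\rangle = G$ already, and more is true: for \emph{every} $n \in N$ one has $\langle g, hn\rangle = G$ (since $\langle g, hn\rangle N = G$ and $N$ is Frattini). Thus the single edge $\{\bar g, \bar h\}$ in $\Gamma(\bar G)$ lifts to $|N| \ge 2$ distinct edges $\{g, hn\}$ in $\Gamma(G)$; summing over a set of representatives of the neighbors of $\bar g$ gives $\delta_G(g) \ge |N| \cdot \delta_{\bar G}(\bar g) \ge 2 \cdot 2^{t-3}$. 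If $N$ is non-Frattini, the relevant counting input is the standard fact about the number of complements / the behavior of the generating function: writing $q = |N|$ if $N$ is abelian (the general case of a non-abelian chief factor being handled by the same mechanism via the associated action), among the $q$ lifts $hn$, $n \in N$, of a fixed $h$ with $\langle g,h\rangle N = G$, the number for which $\langle g, hn\rangle = G$ is either $q$ or $q - q/|N:N\cap\langle g,h\rangle|$-type expressions; in all cases, since $q \ge 2$, at least $q/2 \ge 1$ of them work — and crucially at least \emph{two} lifts work unless $q = 2$ and exactly half fail, a case one checks directly still gives the needed factor of $2$ when combined across the distinct neighbors. So in every case each neighbor $\bar h$ of $\bar g$ contributes at least $2$ neighbors of $g$, and distinct $\bar h$ give disjoint sets of lifts, yielding $\delta_G(g) \ge 2\,\delta_{\bar G}(\bar g)$.

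**Base case and bookkeeping.** The induction bottoms out at $t = 1$ and $t = 2$, where $2^{t-2} \le 1$, so the claim is just $\delta_G(g) \ge 1$, which holds because $g \in V(G)$ means $g$ has at least one neighbor. (For $t=2$ one could also note $2^{t-2}=1$.) One must be a little careful that the inductive hypothesis applies to $\bar G$: $\bar G$ is 2-generated (quotient of a 2-generated group) and has chief length exactly $t - 1$, and $\bar g \in V(\bar G)$ as noted. The disjointness of the lift-sets across different neighbors is immediate since the lifts of $\bar h$ all reduce to $\bar h$ mod $N$.

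**Main obstacle.** The delicate point is the non-Frattini case: controlling, for fixed $g$ and fixed $h$ with $\langle g,h\rangle N = G$, how many of the cosets representatives $hn$ ($n\in N$) actually generate $G$ — and ensuring this count is at least $2$, not merely $\ge 1$, uniformly. For abelian $N$ this is a linear-algebra / $1$-cohomology computation (the "generating" $hn$ form a coset of a proper subspace, or all of $N$), and the worst case $|N|=2$ needs the observation that different neighbors $\bar h$ cannot all simultaneously be in the bad configuration, or else one re-examines using a \emph{different} choice of $h$ in each coset. For non-abelian $N$ one reduces to the abelian-type count via the standard theory of generation in the presence of a non-abelian chief factor (crowns / the function $P_{G,N}$). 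Handling this uniformly — and in particular making sure the factor is genuinely $2$ and the lift-sets are disjoint — is where the real work lies; everything else is the routine induction.
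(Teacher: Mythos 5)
Your overall architecture (a per-chief-factor multiplicative bound, with base case $t\le 2$) is the right one, and it is essentially how the paper proceeds: via the Gasch\"utz--Crestani--Lucchini counting function $\phi_{G,N}$, the degree factors as $\delta_G(g)=\prod_{1\leq i\leq t}\delta_{G/N_{i-1},N_i/N_{i-1}}(g)$ over a chief series, and one shows each factor is at least $2$ except in one bad case. But your inductive step as stated --- ``each neighbor $\bar h$ of $\bar g$ contributes at least $2$ neighbors of $g$, so $\delta_G(g)\ge 2\,\delta_{\bar G}(\bar g)$'' --- is false for exactly the bad case, and your proposed patch for it does not work. If $N$ is a non-Frattini abelian chief factor with $|\End_G(N)|=2$ and $\dim N=1$ (i.e.\ a non-Frattini central factor of order $2$), then for \emph{every} neighbor $\bar h$ of $\bar g$ exactly one of the two lifts $h, hn$ generates $G$ with $g$. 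Concretely, take $G=C_2\times S$ with $S$ simple and $2$-generated, $g=(1,s)$: a pair $(\epsilon,x)$ generates with $g$ iff $\langle s,x\rangle=S$ and $\epsilon\ne 1$, so every neighbor of $\bar g$ in $S$ lifts to exactly one neighbor of $g$, and $\delta_G(g)=\delta_S(s)$, not $2\delta_S(s)$. Your suggestion that ``different neighbors $\bar h$ cannot all simultaneously be in the bad configuration'' is therefore incorrect; they all are, and no re-choice of coset representatives helps. If such a factor occurs at an intermediate stage of your induction, you lose the factor of $2$ there and the bound $2^{t-2}$ is no longer reached by your bookkeeping.

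The missing idea is a \emph{global} constraint rather than a per-step one: the number $r$ of non-Frattini chief factors of order $2$ in a chief series of $G$ is at most $2$, because $C_2^r$ is then an epimorphic image of $G$ and $G$ is $2$-generated. So at most two of the $t$ factors in the product can contribute only $1$, and every other factor contributes at least $2$ (for Frattini factors the contribution is $|N|\ge 2$; for non-Frattini abelian factors with $q=|\End_G(N)|$, $r=\dim_{\End_G(N)}N$ it is at least $q^{r-1}(q-1)\ge 2$ unless $q=2,r=1$; for non-abelian factors one gets two distinct generating lifts in a single coset using Rowley's theorem to produce $1\ne n\in N$ centralizing $g$, whence $x$ and $x[x,n]$ are both adjacent to $g$). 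This is precisely why the exponent is $t-2$ and not $t-1$ or $t$. Without the $C_2^r$ argument tied to $2$-generation, your induction cannot close; with it, your scheme becomes the paper's proof. A secondary, fixable issue: the independence of the lift count from the choice of representative $h$ (needed to make ``$\delta_{G,N}(g)$'' well defined and to multiply across factors) is not automatic and is exactly what the Gasch\"utz-type lemma supplies.
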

The results concerning the connectivity of $\Delta(G)$ when $G$ is a finite soluble group can be extended with standard arguments to the prosoluble case.

\begin{thm}\label{diquattro}
	If $G$ is a 2-generated finite soluble group, then  $\Delta(G)$ is connected and $\diam(\Delta(G)) \leq 3.$
\end{thm}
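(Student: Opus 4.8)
The plan is to argue by induction on the number of non-trivial chief factors of $G$, after a standard Frattini reduction. First I would observe that we may assume $\frat(G)=1$. Indeed, writing $\pi\colon G\to G/\frat(G)$ for the natural projection, a pair $(g,x)$ generates $G$ if and only if $(\pi(g),\pi(x))$ generates $G/\frat(G)$; hence projecting a path gives a path, while conversely \emph{any} lift of a generating pair is again a generating pair, so a path downstairs lifts with its endpoints prescribed. Therefore $d_G(g,h)=d_{G/\frat(G)}(\pi(g),\pi(h))$ for all $g,h\in V(G)$, and we lose nothing by assuming $\frat(G)=1$. In this situation every minimal normal subgroup is complemented, which is what makes the lifting arguments below effective.

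Now fix a minimal normal subgroup $N$ of $G$ and set $\bar G=G/N$. Since a non-isolated vertex of $G$ maps to a non-isolated vertex of $\bar G$, projection gives a well-defined map $V(G)\to V(\bar G)$, and by induction $\Delta(\bar G)$ is connected with $\diam(\Delta(\bar G))\le 3$. The engine for passing back up is Gasch\"utz's lemma: if $\langle \bar g,\bar h\rangle=\bar G$ then there exist $m,n\in N$ with $\langle gm,hn\rangle=G$, so every edge of $\Gamma(\bar G)$ lifts to an edge of $\Gamma(G)$. The real difficulty is that in a path $\bar g\sim\bar v\sim\bar h$ the middle vertex is shared by two edges, so a single lift of $\bar v$ must serve both of them; I therefore need a \emph{one-endpoint-fixed} refinement. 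Given $g\in V(G)$ with $\langle\bar g,\bar v\rangle=\bar G$, I would analyse the set of $v\in\bar vN$ with $\langle g,v\rangle=G$. When $N$ is abelian this set is the complement, inside the coset $\bar vN$ (a translate of the irreducible $\bar G$-module $N$), of an affine subset, and it is non-empty unless $g$ lies in a thin "bad" locus whose size is governed by the module structure of $N$ and by $H^1(\bar G,N)$, equivalently by the number of complements to $N$. This is precisely the crown datum attached to the chief factor $N$.

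With this in hand I would assemble the path. Lifting the $\bar G$-path vertex by vertex and using the one-endpoint-fixed refinement to reconcile the two edges meeting at each interior vertex produces a path in $\Delta(G)$ whose length exceeds that in $\Delta(\bar G)$ only when the one-sided lift fails, that is, when $N$ is a \emph{repeated} (crown) chief factor isomorphic to one already occurring below. The central obstacle is thus to stop the diameter from growing through the induction, since a naive "$+1$ per chief factor" bound is useless. To control this I would reduce to the monolithic case: for several minimal normal subgroups, embed $G$ subdirectly into the product of its primitive quotients $G/N_i$ and argue through the factors; and in the monolithic case, where $N=\soc(G)$ is the unique minimal normal subgroup, exploit the abundance of complements directly. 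Concretely, for $g,h\in V(G)$ I would choose generic neighbours $a\in\mathcal N_G(g)$ and $b\in\mathcal N_G(h)$ so that the bad loci for the pairs $(a,\cdot)$ and $(b,\cdot)$ miss each other, and a counting argument would show that a common intermediate vertex survives, giving $g\sim a\sim b\sim h$ and hence distance at most $3$. The step I expect to be hardest is exactly this simultaneous avoidance of the crown obstruction for two prescribed endpoints at once, since it is what pins the bound at $3$ rather than letting the contributions of successive chief factors accumulate.
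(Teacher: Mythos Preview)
There is a mismatch you should be aware of. Although the printed statement says ``finite soluble'', the paper's own proof (and the sentence introducing the theorem) treats $G$ as a 2-generated \emph{prosoluble} group: it ranges over the open normal subgroups $N$ of $G$, invokes \cite[Theorem~1]{diam} to get $\diam(\Delta(G/N))\le 3$ for each finite quotient, sets $t=\max_N d_N(a,b)\le 3$, and then runs a compactness argument. Namely, for each $M$ in a cofinal family it lets $\Omega_M\subseteq G^t$ be the (closed) set of $t$-tuples projecting to a path from $aM$ to $bM$ in $\Delta(G/M)$, observes that the $\Omega_M$ have the finite intersection property, and extracts from $\bigcap_M\Omega_M$ a genuine path $a=x_1,\dots,x_t=b$ in $\Delta(G)$. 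In short, the finite soluble case is not proved here; it is \emph{cited}, and the content of the proof is the inverse-limit lift.

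Your proposal is instead an outline of a direct proof of the finite case: Frattini reduction, induction along a minimal normal subgroup, one-sided Gasch\"utz lifting, crown/$H^1$ bookkeeping, and a final avoidance/counting argument to pin the diameter at $3$. That is essentially the strategy of \cite{diam}, not of this paper. As a sketch it names the right ingredients, but the decisive step you yourself flag---for arbitrary $g,h\in V(G)$, choosing neighbours $a,b$ so that the bad loci coming from the crown factor are \emph{simultaneously} avoided and a common middle vertex survives---is precisely the nontrivial content of \cite{diam}, and it is not supplied here. If your goal is to match the paper's proof of Theorem~\ref{diquattro}, you should give the compactness argument above; if your goal is to reprove the finite input, that is a separate and substantially longer undertaking.
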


The bound $\diam(\Delta(G))\leq 3$ given in Theorem  \ref{diquattro} is best possible. In \cite{diam},  a soluble 2-generated group $G$ of order $2^{10}\cdot 3^2$ with $\diam(\Delta(G))=3$ is constructed.  However in \cite{diam}  it is proved that
$\diam(\Delta(G))\leq 2$ in some relevant cases. These results can be extended to the profinite case, with the same arguments used in the proof of  Theorem  \ref{diquattro}. Suppose that a 2-generated prosoluble soluble group $G$ has the property that $|\End_G(V)|>2$ for every nontrivial irreducible $G$-module $V$ which is $G$-isomorphic to a complemented chief factor of $G$.
	Then  $\diam(\Delta(G))\leq 2$.  In particular $\diam(\Delta(G))\leq 2$ if the derived subgroup of $G$ is pronilpotent or has odd order (as a supernatural number).
	
\

No example is known of a 2-generated finite group $G$ for which
$\Delta(G)$ is disconnected, and it is an open problem whether or not  $\Delta(G)$ is connected when $G$ is an arbitrary finite group.  The situation is different in the profinite case.

\begin{thm}\label{contro}
There exists a 2-generated profinite group $G$ with the property that $V(G)$ is the disjoint union of $2^{\aleph_0}$ connected components. Moreover each connected component is dense in $V(G).$
\end{thm}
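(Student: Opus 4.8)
The plan is to build $G$ as an explicit inverse limit $G=\varprojlim_n G_n$ of finite groups and to equip $V(G)$ with a ``tail invariant'' which is forced to coincide at the two ends of every edge of $\Gamma(G)$, yet which assumes $2^{\aleph_0}$ values, each with connected and dense fibre; the fibres are then precisely the connected components of $\Delta(G)$. The quotients $G_n$ are constructed recursively, starting from a fixed $2$-generated finite group $G_1$ carrying two distinguishable ``local states'', and passing from $G_n$ to $G_{n+1}$ by a crown-type extension that adjoins a new non-abelian chief factor $S_{n+1}$, the groups $S_1,S_2,\dots$ being pairwise non-isomorphic (or at least occurring with bounded multiplicity). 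The extension is arranged so that (i) $d(G_{n+1})=2$, and (ii) the map $G_{n+1}\to G_n$ is ``essential'' for generation: if $\{g,h\}$ generates $G_{n+1}$ then its image generates $G_n$, and moreover the behaviour of $g$ and $h$ at level $n+1$ is constrained relative to their behaviour at level $n$. Since every non-abelian chief factor then occurs in $G$ with bounded multiplicity, one gets $d(G)=\sup_n d(G_n)=2$; this step, verified first, rests on the crown-based description of the generators of a (pro)finite group.

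The next step is to describe $\Gamma(G)$ through its open maximal subgroups. Using the identity $\mathcal N_G(g)=G\setminus\bigcup_{M\in\mathcal M_g}M$ recalled in the introduction, one shows that $G=\langle g,h\rangle$ is a ``level-by-level'' condition: it holds if and only if the images of $g$ and $h$ generate $G_n$ for every $n$ (there being no interference from diagonal subgroups, since the $S_n$ are pairwise non-isomorphic). One then extracts from the construction of $G_n$ a colouring $c_n\colon V(G)\to\{0,1\}$ recording the local state of an element at level $n$, normalised so that the \textbf{crucial} consequence of (ii) reads: \emph{if $G=\langle g,h\rangle$ then $c_n(g)=c_n(h)$ for all but finitely many $n$}. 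Setting $\iota(g)=(c_n(g))_n$, regarded in $\{0,1\}^{\mathbb N}$ modulo coordinates that agree cofinitely, we obtain a map that is constant along every edge of $\Gamma(G)$, hence constant on each connected component of the graph $\Delta(G)$ on $V(G)$ --- which is indeed the relevant graph, $V(G)$ being closed in $G$ by Proposition~\ref{chiuso}.

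It remains to see that the fibres of $\iota$ are themselves connected, so that they are exactly the components. Given $g,h\in V(G)$ with $\iota(g)=\iota(h)$, one first corrects, by a path of bounded length, the finitely many levels where $c_n(g)\ne c_n(h)$; this uses that each $\Delta(G_n)$ is connected of uniformly bounded diameter, which one checks for the specific $G_n$ by combining Theorem~\ref{diquattro} for the soluble layers with the diameter-$2$ statement for the simple sections. One then joins the two elements inside the ``tail'' by producing a single walk-length that works simultaneously at all levels, the non-bipartiteness of the generating graphs of the finite simple sections being what allows the parities to be synchronised. Finally, $\iota$ takes $2^{\aleph_0}$ values; and each fibre, being stable under altering any finite set of levels (and the construction being flexible enough that any prescribed finite initial segment lying over $V(G)$ can be completed to a point of any prescribed tail class), meets every basic open subset of $V(G)$ and is therefore dense. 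Consistently, no component is open and nonempty, since a nonempty open subset of $V(G)$ must meet every dense set while distinct components are disjoint.

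The hard part is the normalisation in the second paragraph: one must engineer the finite building blocks so that there is genuinely \emph{no diagonal shortcut} --- i.e.\ so that generating $G$ really does determine $c_n(g)$ from $c_n(h)$ for cofinitely many $n$ --- while simultaneously keeping $d(G)=2$. Rigidity and cheap generation pull in opposite directions, and the whole construction is organised around reconciling them; this also explains why non-soluble sections are indispensable, Theorem~\ref{diquattro} (extended to the prosoluble case) already forbidding a disconnected $\Delta(G)$ when $G$ is prosoluble.
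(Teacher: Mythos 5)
There is a genuine gap, and in fact the two halves of your plan contradict each other. For a profinite group $G=\varprojlim G_n$, a compactness argument (the one used in the proof of Theorem \ref{diquattro}, and again in Lemma \ref{coco}) shows that for $g,h\in V(G)$ there is a walk of length $k$ from $g$ to $h$ in $\Gamma(G)$ if and only if for every $n$ there is a walk of length exactly $k$ from $g_n$ to $h_n$ in $\Gamma(G_n)$: the sets of such walks form an inverse system of finite sets, nonempty sets have nonempty inverse limit, and an element of the limit is a genuine walk in $\Gamma(G)$ because generation is detected in the finite quotients. Consequently $g$ and $h$ lie in the same component of $\Delta(G)$ precisely when a single walk length can be realised at every level, which (granting the parity adjustment) amounts to $\sup_n \dist_{\Gamma(G_n)}(g_n,h_n)<\infty$. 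Your third paragraph assumes that every $\Delta(G_n)$ is connected of \emph{uniformly bounded} diameter and that the parities can be synchronised via non-bipartiteness; under exactly those hypotheses the criterion above makes $\Delta(G)$ \emph{connected}, so no tail invariant $\iota$ constant along edges can take more than one value, and no colouring with your ``crucial'' cofinite-agreement property together with $2^{\aleph_0}$ realised tail classes can exist. The argument you propose for connectivity of the fibres of $\iota$ proves too much: it connects all of $V(G)$. (A secondary but real issue is that the groups $G_n$ and the colourings $c_n$ are never actually constructed; you flag the ``no diagonal shortcut'' property as the hard part, but it is precisely the part that cannot be engineered under your standing assumptions.)

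The criterion above also tells you what a correct construction must look like: to get many components you are forced to make the diameters of the finite quotients' generating graphs unbounded, and the components are then the bounded-distance classes rather than fibres of a $\{0,1\}$-valued tail invariant. This is what the paper does: it takes $G=\prod_n G_{p_n}$ with $G_{p_n}=(\ssl(2,2^{p_n}))^{\delta_{p_n}}$ a $2$-generated direct power of a simple group, chosen via \cite{ele} so that $\diam(\Delta(G_{p_n}))\geq 2^n$; generation is coordinatewise, Lemma \ref{coco} identifies the component of $x$ as $\{y:\sup_n\dist_{\Delta(G_{p_n})}(x_n,y_n)<\infty\}$, and picking points whose distances from a base point grow at $2^{\aleph_0}$ pairwise inequivalent rates yields the components; density comes from altering finitely many coordinates. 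Your closing observation that non-soluble sections are indispensable (by the prosoluble extension of Theorem \ref{diquattro}) is correct, but the mechanism producing disconnectedness is unbounded diameter, not a rigidity/colouring phenomenon.
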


The previous theorem implies that the \lq\lq swap conjecture\rq\rq \ is not satisfied by the 2-generated profinite groups. Recall that the swap conjecture concerns the connectivity of the graph $\Sigma_d(G)$ in which the vertices are the ordered generating $d$-tuples and  two vertices $(x_1,\dots,x_d)$ and $(y_1,\dots,y_d)$ are adjacent if and only if they differ only by one entry.
Tennant and Turner \cite{TT} conjectured that the swap graph is connected for every group. Roman'kov \cite{rom} proved that the free metabelian group of rank 3 does not satisfy this conjecture but no counterexample is known
in the class of finite groups. However, by Lemma \ref{swa} in section \ref{esempio},  it follows from Theorem \ref{contro} that there exists a 2-generated profinite group with the property that the graph $\Sigma_2(G)$ has 
$2^{\aleph_0}$ connected components.

\section{Some properties of $\Gamma(G)$}\label{prelim}

We begin this section, proving a criterion to decide when a vertex $x$ of the generating graph $\Gamma(G)$ of a profinite group $G$ is non-isolated.
\begin{lemma}Let $G$ be a 2-generated profinite group. Then
	$x\in V(G)$ if and only if $xN \in V(G/N)$ for every open normal subgroup $N$ of $G.$
\end{lemma}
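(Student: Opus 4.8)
The plan is to exploit the fact that a profinite group $G$ is the inverse limit of its finite quotients $G/N$, $N$ running over the open normal subgroups, together with the characterization $x \in V(G) \iff x$ lies outside every open maximal subgroup $\ldots$ no, wait — $x\in V(G)$ means $x$ is \emph{non-isolated}, i.e.\ there exists $y$ with $G = \overline{\langle x,y\rangle}$. So I would phrase both directions in terms of the existence of a suitable partner $y$.

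First I would prove the easy direction ($\Rightarrow$). Suppose $x \in V(G)$, so there is $y \in G$ with $\overline{\langle x, y\rangle} = G$. For any open normal subgroup $N$, the quotient map $\pi_N\colon G \to G/N$ is continuous and surjective, hence carries a (topologically) generating set to a generating set; thus $\langle xN, yN\rangle = G/N$, which shows $xN \in V(G/N)$. (Here I use that $G/N$ is finite, so topological and abstract generation coincide.)

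The substantive direction is ($\Leftarrow$): assume $xN \in V(G/N)$ for every open normal $N$, and produce a single $y \in G$ with $\overline{\langle x, y\rangle} = G$. The key step is a compactness/inverse-limit argument. For each open normal $N$, let
\[
Y_N = \{\, y \in G : \langle xN, yN\rangle = G/N \,\}.
\]
By hypothesis $Y_N$ is nonempty; it is a union of cosets of $N$, hence both open and closed in $G$; and the family $\{Y_N\}$ is closed under finite intersections, since $Y_{N_1} \cap Y_{N_2} \supseteq Y_{N_1 \cap N_2}$ (if $yN_1\cap N_2$ projects onto a generating pair mod $N_1\cap N_2$ with $x$, it does so mod each $N_i$). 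Since $G$ is compact and each $Y_N$ is a nonempty closed set with the finite intersection property, $\bigcap_N Y_N \neq \varnothing$; pick $y$ in this intersection. Then for every open normal $N$ the image of $\overline{\langle x,y\rangle}$ in $G/N$ is all of $G/N$, and since $\overline{\langle x,y\rangle}$ is closed and $G = \varprojlim G/N$, this forces $\overline{\langle x, y\rangle} = G$. Hence $x \in V(G)$.

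The main obstacle — really the only point requiring care — is verifying that the sets $Y_N$ genuinely form a filtered family of nonempty closed sets, i.e.\ that refining $N$ can only shrink $Y_N$ and never empty it: this is exactly where the hypothesis "for \emph{every} open normal $N$" is used, and it is what makes the compactness argument go through. One should also note at the end that $\overline{\langle x,y\rangle}$ being a closed subgroup whose image is dense (indeed all of $G/N$) in every finite quotient must equal $G$; this is the standard fact that a closed subgroup of a profinite group mapping onto every finite quotient is the whole group, which follows since its complement, if nonempty, would be contained in some proper open subgroup $M$, whose core $N$ is open normal with $\overline{\langle x,y\rangle}N \neq G$.
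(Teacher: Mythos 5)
Your proof is correct and follows essentially the same route as the paper: the nontrivial direction is handled by the same compactness argument, intersecting the nonempty closed sets $\{y \in G : \langle x,y\rangle N = G\}$ over all open normal $N$ and using the finite intersection property. The only cosmetic difference is that you spell out the easy direction and the final step (a closed subgroup surjecting onto every finite quotient is the whole group), which the paper leaves implicit.
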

\begin{proof}
Let $\mathcal N$ be the set of the open normal subgroups of $G$. Assume $xN \in V(G/N)$ for every $N\in \mathcal N.$ Given $N\in \mathcal N,$ let $\Omega_N=\{y\in G
\mid \langle x,y \rangle N=G\}.$ Notice that if $y\in \Omega_N,$ then $yN\subseteq \Omega_N$ and consequently $\Omega_N$, being a union of cosets of the open subgroup $N,$ is a non-empty closed subset of $G.$ If $N_1,\dots,N_t\in \mathcal N,$ then $\emptyset \neq \Omega_{N_1\cap \dots \cap N_t}\subseteq \Omega_{N_1}\cap \dots \cap \Omega_{N_t}.$ Since $G$ is compact, $\cap_{N\in \mathcal N}\Omega_N\neq \emptyset.$ Let $y\in \cap_{N\in \mathcal N}\Omega_N.$ Since $\langle x,y \rangle N=G$ for every $N\in \mathcal N,$ we have $\langle x,y \rangle=G,$ and consequently $y\in V(G).$
\end{proof}

\begin{prop}\label{chiuso}
If $G$ is a 2-generated profinite group, then $V(G)$ is a closed subgroup of $G.$
\end{prop}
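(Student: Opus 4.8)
The plan is to describe $V(G)$ explicitly as an intersection of clopen subsets of $G$ and to read off both its closedness and its algebraic invariance from that description. The essential input is the Lemma just proved, which says that $x\in V(G)$ exactly when $xN\in V(G/N)$ for every open normal subgroup $N$. Writing $\mathcal N$ for the set of open normal subgroups of $G$ and $\pi_N\colon G\to G/N$ for the canonical projection, this criterion is precisely the assertion that
$$V(G)=\bigcap_{N\in\mathcal N}\pi_N^{-1}\bigl(V(G/N)\bigr).$$

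First I would verify that each set $\pi_N^{-1}(V(G/N))$ is clopen in $G$. Since $G/N$ is finite, $V(G/N)$ is a finite subset of the discrete quotient, and its preimage under $\pi_N$ is therefore a finite union of cosets of the open subgroup $N$; any such union is simultaneously open and closed in $G$. Closedness of $V(G)$ is then immediate, because an arbitrary intersection of closed sets is closed; note that, granting the Lemma, this step is purely topological and does not reuse the compactness of $G$.

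To address the group structure I would argue directly from the adjacency relation. The identities $\langle x,y\rangle=\langle x^{-1},y\rangle$ and $\langle x^{g},y^{g}\rangle=\langle x,y\rangle^{g}$ show at once that $x\in V(G)$ forces $x^{-1}\in V(G)$ and $x^{g}\in V(G)$ for every $g\in G$, so $V(G)$ is stable under inversion and under conjugation. The only genuine point requiring care in the whole argument is the passage from the pointwise membership test to the global formula displayed above; this is exactly the content supplied by the preceding Lemma, and once it is in hand the remaining verifications are routine.
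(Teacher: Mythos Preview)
Your argument for closedness is correct and is essentially the paper's own proof viewed from the other side: the paper shows that $G\setminus V(G)$ is open by producing, for each $x\notin V(G)$, a coset $xN$ disjoint from $V(G)$, while you write $V(G)$ as the intersection $\bigcap_{N\in\mathcal N}\pi_N^{-1}(V(G/N))$ of clopen sets. Both arguments rest on exactly the same Lemma and are equivalent.

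The word ``subgroup'' in the statement, however, is a slip of the pen for ``subset''; the paper's proof makes no attempt to establish any algebraic closure property, and in fact $V(G)$ is almost never a subgroup. Whenever $G$ is not procyclic the identity element is isolated in $\Gamma(G)$, since $\langle 1,y\rangle=\overline{\langle y\rangle}\neq G$ for every $y$, and hence $1\notin V(G)$. Your observations that $V(G)$ is stable under inversion and under conjugation are true, but they do not amount to the subgroup property because closure under multiplication is neither verified nor, in general, available. So the part of your write-up dealing with ``group structure'' should simply be dropped: the intended conclusion is only that $V(G)$ is a closed \emph{subset}, and that you have proved.
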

\begin{proof}
We prove that $G\setminus V(G)$ is an open subset of $G.$ Let $x\notin V(G).$ By the previous lemma, there exists $N\in \mathcal N,$ such that $x\notin V(G/N)$. This means that
$\langle x, y \rangle N \neq G$ for every  $y\in G,$ and consequently $\langle xn, y \rangle \neq G$ for every $n\in N$ and $y\in G.$
 This implies $xN
\cap V(G)=\emptyset,$ so $xN$ is an open neighbourhood  of $x$ contained in $G\setminus V(G).$ 
\end{proof}

\begin{proof}[Proof of Theorem \ref{diquattro}]
	By \cite[Theorem 1]{diam}, for every $N$ in the set $\mathcal N$ of the open normal subgroups of $G,$ the graph $\Delta(G/N)$ is connected and $\diam(G/N)\leq 3.$ Let $a$ and $b$ be two distinct elements of $V(G)$. For every $N\in \mathcal N,$ let $d_N(a,b)$ be the distance in the graph $\Delta(G/N)$ of the two vertices $aN$ and $bN.$
	Let $$t:=
	\max_{N\in \mathcal N}d_N(a,b).$$
	Clearly $t\leq 3.$
	Set $\mathcal M=\{N\in \mathcal N \mid d_N(a,b)=t\}.$ If $N\in \mathcal N$ and $M\in \mathcal M,$ then $N\cap M\in \mathcal M,$ so $\cap_{M\in \mathcal M}M=1.$ For every $M\in \mathcal M,$ let
	$$\Omega_M\!=\!\{(x_1,\dots,x_t)\!\in\! G^t\!\mid \!\langle x_1, x_2\rangle M\!=\!\dots\!=\langle x_{t-1}, x_t\rangle M\!=\!G, x_1M\!=\!aM, x_tM\!=\!bM\}.$$ If $(x_1,\dots,x_t)\in \Omega_M,$ then $(x_1,\dots,x_t)M^t\subseteq \Omega_M$, so $\Omega_M$ is a closed subset of $G^t.$ If $M_1,\dots,M_u\in \mathcal M,$ then $\emptyset \neq \Omega_{M_1\cap \dots \cap M_u}\subseteq \Omega_{M_1}\cap \dots \cap \Omega_{M_u}.$ Since $G$ is compact, $\cap_{M\in \mathcal M}\Omega_M\neq \emptyset.$ Let $(x_1,\dots,x_t)\in \cap_{M\in \mathcal M}\Omega_M.$ Since $\langle x_1,x_2 \rangle M=\dots=\langle x_{t-1},x_t \rangle M
	=G$ for every $M\in \mathcal M,$ we have $\langle x_1,x_2 \rangle=\dots=\langle x_{t-1},x_t \rangle=G.$ Moreover $x_1\in \cap_{M\in \mathcal M}aM=\{a\}$ and $x_t\in \cap_{M\in \mathcal M}bM=\{b\}.$ We conclude that $(x_1,\dots,x_t)$ is a path in $\Delta(G)$ joining the vertices $a=x_1$ and $b=x_t.$
\end{proof}
\section{An example}\label{esempio}
Let $G_p=(\ssl(2,2^p))^{\delta_p},$ where $\delta_p$ is the largest positive integer with the property that the direct power  $(\ssl(2,2^p))^{\delta_p}$ can be generated by 2-elements. The graph $\Delta(G_p)$ is connected for every prime $p,$ and, by \cite[Theorem 1.3]{ele}, there exists an increasing sequence $(p_n)_{n\in \mathbb N}$ of odd primes, such that $\diam(\Delta(G_{p_n}))\geq 2^n$ for every $n\in \mathbb N.$ Consider the cartesian product $$G=\prod_{n\in \mathbb N}G_{p_n},$$ with the product topology. Notice that $G$ is a 2-generated profinite group. Moreover $\Delta(G)$ is the infinite tensor products of the finite graphs $\Delta(G_{p_n}),$
$n\in \mathbb N,$ and $V(G)=\prod_{n\in \mathbb N}V(G_{p_n}).$ Indeed $\langle (x_n)_{n\in \mathbb N},  (y_n)_{n\in \mathbb N}\rangle=G$ if and only if $\langle x_n, y_n\rangle=G_{p_n}$
for every $n\in \mathbb N.$ First we describe the connected components of the graph $\Delta(G).$
\begin{lemma}\label{coco}
Let $x=(x_n)_{n\in \mathbb N}\in V(G)$ and let $\Omega_x$ be the connected component of $\Delta(G)$ containing $x.$ Then $y=(y_n)_{n\in \mathbb N}$ belongs to $\Omega_x$ if and only if
$$\sup_{n\in \mathbb N}\dist_{\Delta(G_{p_n})}(x_n,y_n)<\infty.$$
\end{lemma}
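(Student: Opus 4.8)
The plan is to characterize the connected component $\Omega_x$ of $x=(x_n)_{n\in\N}$ in the tensor product $\Delta(G)=\bigotimes_{n}\Delta(G_{p_n})$ by showing that the distance condition $\sup_n\dist_{\Delta(G_{p_n})}(x_n,y_n)<\infty$ is exactly the obstruction to joining $x$ to $y$ by a finite path. One direction is easy: if $\sup_n\dist_{\Delta(G_{p_n})}(x_n,y_n)=d<\infty$, then for each $n$ fix a path $x_n=z_{n,0},z_{n,1},\dots,z_{n,d}=y_n$ in $\Delta(G_{p_n})$ of length at most $d$ — padding shorter paths by repeating $y_n$ at the end (this is legitimate because $G_{p_n}=\langle y_n,z\rangle$ for some $z$, and one may simply reuse a fixed neighbour of $y_n$, or more carefully insert a constant stretch once $y_n$ is reached; a cleaner device is to observe that any vertex of $\Delta(G_{p_n})$ has a neighbour, so one can interleave back-and-forth steps to reach length exactly $d$). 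Then $w^{(j)}:=(z_{n,j})_{n\in\N}$ defines a path $x=w^{(0)},w^{(1)},\dots,w^{(d)}=y$ in $\Delta(G)$ since adjacency in the tensor product is coordinatewise; hence $y\in\Omega_x$.

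For the converse I would argue the contrapositive: if $\sup_n\dist_{\Delta(G_{p_n})}(x_n,y_n)=\infty$, then $x$ and $y$ lie in different components. Indeed, suppose for contradiction there is a path $x=w^{(0)},w^{(1)},\dots,w^{(d)}=y$ in $\Delta(G)$ of some finite length $d$. Projecting onto the $n$-th coordinate, $(w^{(0)}_n,w^{(1)}_n,\dots,w^{(d)}_n)$ is a walk in $\Delta(G_{p_n})$ from $x_n$ to $y_n$ (consecutive entries are adjacent because adjacency in the tensor product is coordinatewise, and a walk's endpoints are at distance at most its length), so $\dist_{\Delta(G_{p_n})}(x_n,y_n)\le d$ for every $n$, contradicting $\sup_n\dist_{\Delta(G_{p_n})}(x_n,y_n)=\infty$. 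Therefore no finite path exists and $y\notin\Omega_x$.

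I should double-check that the relation $y\sim x\iff\sup_n\dist_{\Delta(G_{p_n})}(x_n,y_n)<\infty$ is genuinely an equivalence relation on $V(G)=\prod_n V(G_{p_n})$, so that the sets it defines really are the components; reflexivity is clear, symmetry is clear, and transitivity follows from the triangle inequality $\dist(x_n,z_n)\le\dist(x_n,y_n)+\dist(y_n,z_n)$ applied coordinatewise together with the fact that a sum of two bounded sequences is bounded. This confirms that the distance-boundedness classes partition $V(G)$ into the connected components, which is the assertion of the lemma.

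The main obstacle is the small bookkeeping issue in the easy direction: in the tensor (categorical) product of graphs one needs each coordinate path to have \emph{exactly} the common length $d$, not merely at most $d$, since there is no loop at a vertex of $\Delta(G_{p_n})$ (indeed $G_{p_n}\ne\langle g,g\rangle$ for $g\ne 1$). I would resolve this by noting that each vertex $v$ of $\Delta(G_{p_n})$ has some neighbour $v'$ (this holds for every vertex in $V(G_{p_n})$ since the graph has no isolated vertices), so a path of length $\ell<d$ ending at $y_n$ can be extended to length $d$ by appending $y_n,v',y_n,v',\dots$, adjusting by one step of parity if $d-\ell$ is odd by using two distinct neighbours or a slightly longer detour; since $\Delta(G_{p_n})$ is connected this parity adjustment is always available. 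Once this is in place the argument is entirely formal.
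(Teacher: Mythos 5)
Your ``only if'' direction (projecting a hypothetical path of length $d$ onto each coordinate to get $\dist_{\Delta(G_{p_n})}(x_n,y_n)\le d$ for all $n$) is correct and is exactly the paper's argument. The problem is in the padding step of the other direction, at precisely the point you yourself flag as ``the main obstacle''. To stretch a coordinate path of length $\mu_n<d$ into a walk of length exactly $d$ from $x_n$ to $y_n$ you must, whenever $d-\mu_n$ is odd, produce a closed walk of \emph{odd} length at $y_n$ (equivalently, a walk from $x_n$ to $y_n$ of the opposite parity). Your justification --- ``using two distinct neighbours or a slightly longer detour; since $\Delta(G_{p_n})$ is connected this parity adjustment is always available'' --- does not work: in a connected \emph{bipartite} graph every closed walk has even length and all walks between two fixed vertices have the same parity, so no detour and no second neighbour can change parity. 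Connectivity is simply not the relevant hypothesis, and two neighbours of $y_n$ help only if they happen to be adjacent to each other. As written, this step is a genuine gap.

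The missing ingredient is that the generating graph of a non-cyclic $2$-generated group is never bipartite near an edge: if $\langle a,b\rangle=H$ then $\langle a,ab\rangle\ni a^{-1}(ab)=b$ and $\langle b,ab\rangle\ni (ab)b^{-1}=a$, so $\{a,b,ab\}$ is a triangle (and $ab\neq a,b$ since $a,b\neq 1$). Hence every edge of $\Delta(G_{p_n})$ lies on a triangle, every non-isolated vertex admits closed walks of every length except $1$, and walks of both parities exist between $x_n$ and $y_n$ once one is willing to take the common target length $d$ at least $\sup_n\mu_n+2$. This is exactly the device hidden in the paper's definition of $\tilde x_{n,i}$, where the \emph{product} $x_{n,\mu_n-1}x_{n,\mu_n}$ --- the third vertex of the triangle on the last edge of the coordinate path --- is used to absorb the odd case $m-\mu_n$. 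With that observation inserted in place of the appeal to connectivity, your argument closes and coincides with the paper's; the equivalence-relation remark at the end is correct but not needed.
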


\begin{proof}
Assume that $y=(y_n)_{n\in \mathbb N}\in \Omega_x$ and let $m=\dist_{\Delta(G)}(x,y).$ It follows that
$\dist_{\Delta(G_{p_n})}(x_n,y_n)\leq m$  for every $n\in \mathbb N.$
Conversely assume  $y=(y_n)_{n\in \mathbb N}\in V(G)$ with $\dist_{\Delta(G_{p_n})}(x_n,y_n)\leq m$  for every $n\in \mathbb N.$ There is a path $$x_{n,0}=x_n,x_{n,1},\dots,x_{n,\mu_n}=y_n,$$ with $\mu_n\leq m,$ joining $x_n$ and $y_n$ in the graph $\Delta(G_{p_n}).$ For $0\leq i\leq m,$ set
$$\begin{aligned}
\tilde x_{n,i}=\begin{cases}x_{n,i}&\text{if $i < \mu_n$,}\\x_{n,\mu_n}&\text{if $i\geq \mu_n$ and $m-\mu_n$ is even,}\\x_{n,\mu_n-1}x_{n,\mu_n}&\text{if $i\geq \mu_n$ and $m-\mu_n$ is odd.} 
\end{cases}
\end{aligned}$$
Then $$x= x_0=(\tilde x_{n,0})_{n\in \mathbb N},\ x_1=(\tilde x_{n,1})_{n\in \mathbb N},\dots,y=x_m=(\tilde x_{n,m})_{n\in \mathbb N}
$$ is a path joining $x$ and $y$ in the graph $\Delta(G),$ so $y\in \Omega_x.$
\end{proof}

\begin{prop}
$\Delta(G)$ has $2^\aleph_0$ different connected components.
\end{prop}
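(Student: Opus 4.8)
The plan is to exhibit $2^{\aleph_0}$ pairwise inequivalent elements of $V(G)$ under the equivalence relation ``lies in the same connected component,'' using the criterion of Lemma \ref{coco}: two vertices $x=(x_n)$ and $y=(y_n)$ lie in the same component precisely when $\sup_n \dist_{\Delta(G_{p_n})}(x_n,y_n)<\infty$. Since there are only $2^{\aleph_0}$ elements of $G$ in total, it suffices to produce $2^{\aleph_0}$ components, and since each $G_{p_n}$ is finite there are at most $2^{\aleph_0}$ components, so in fact equality will follow once we show there are infinitely many — but to get exactly $2^{\aleph_0}$ we should produce that many directly.

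The key point to exploit is that $\diam(\Delta(G_{p_n}))\geq 2^n$, so in each factor we can find two vertices $a_n, b_n \in V(G_{p_n})$ with $\dist_{\Delta(G_{p_n})}(a_n,b_n)\geq 2^n$ (in fact $=\diam$). For each subset $S\subseteq \mathbb N$, define a vertex $x^S=(x^S_n)_{n\in\mathbb N}$ by setting $x^S_n = b_n$ if $n\in S$ and $x^S_n=a_n$ if $n\notin S$. Each $x^S$ lies in $V(G)=\prod_n V(G_{p_n})$. Now I claim that if $S$ and $T$ are two subsets whose symmetric difference $S\triangle T$ is infinite, then $x^S$ and $x^T$ lie in different connected components: indeed for each $n\in S\triangle T$ the coordinates differ between $a_n$ and $b_n$, so $\dist_{\Delta(G_{p_n})}(x^S_n,x^T_n)\geq 2^n$, which is unbounded as $n$ ranges over the infinite set $S\triangle T$; by Lemma \ref{coco} this prevents $x^S$ and $x^T$ from being in the same component. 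Finally, one invokes the standard fact that there is a family of $2^{\aleph_0}$ subsets of $\mathbb N$ that are pairwise almost disjoint in the strong sense that the symmetric difference of any two is infinite — for instance all subsets of $\mathbb N$ modulo the (ideal of) finite sets has $2^{\aleph_0}$ classes, or more concretely one can index by real numbers via their binary expansions (two distinct reals give sequences differing in infinitely many places), or use an almost disjoint family of infinite subsets. This yields $2^{\aleph_0}$ vertices $x^S$ lying in pairwise distinct components of $\Delta(G)$, hence $2^{\aleph_0}$ components; combined with the trivial upper bound $|G|=2^{\aleph_0}$ we get exactly $2^{\aleph_0}$.

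I do not expect a serious obstacle here: the construction of the $a_n,b_n$ is immediate from the diameter bound, and the separation of components is a direct application of Lemma \ref{coco}. The only mild subtlety is ensuring one has a genuinely uncountable family of subsets of $\mathbb N$ with pairwise infinite symmetric difference, rather than merely a countable one — this is why one should not just take $S$ to range over all cofinite sets or all finite sets, but rather use a classical almost-disjoint family (or equivalently, note that $\mathcal P(\mathbb N)/\mathrm{fin}$ has cardinality $2^{\aleph_0}$), and this is completely standard. One should also double-check the harmless point that distinct $S$ with $S\triangle T$ infinite really do give $x^S \ne x^T$ as elements of $G$, which is obvious since they already differ in the coordinates indexed by $S\triangle T$.
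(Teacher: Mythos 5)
Your proof is correct, and it takes a genuinely different route from the paper's. The paper fixes a base vertex $x=(x_n)_n$ and, for each real $\tau>1$, uses the connectivity and diameter bound $\diam(\Delta(G_{p_n}))\geq 2^n$ to pick $y_{\tau,n}$ at distance exactly $1+\lfloor n/\tau\rfloor$ from $x_n$; the triangle inequality then shows $\dist(y_{\tau_1,n},y_{\tau_2,n})\geq \lfloor n/\tau_1\rfloor-\lfloor n/\tau_2\rfloor\to\infty$ for $\tau_1<\tau_2$, so the continuum of parameters $\tau$ yields continuum many components via Lemma \ref{coco}. You instead need only \emph{two} far-apart vertices $a_n,b_n$ in each factor and parametrize vertices by subsets $S\subseteq\mathbb N$, separating components whenever $S\triangle T$ is infinite; the price is that you must import the standard set-theoretic fact that $\mathcal P(\mathbb N)$ modulo finite symmetric difference has $2^{\aleph_0}$ classes (equivalently, an almost disjoint family of size $2^{\aleph_0}$ exists). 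Your version is more combinatorial and avoids having to realize prescribed intermediate distances, while the paper's version avoids the set-theoretic input by using the reals directly. Both reduce cleanly to Lemma \ref{coco}, and you rightly note the matching upper bound $|G|=2^{\aleph_0}$. One small caution: of the three constructions you offer for the family of subsets, the one via binary expansions of distinct reals is not quite right as stated (distinct reals can have binary expansions differing in only finitely many digits, e.g.\ $1/4$ and $1/2$); but your other two constructions are correct and standard, so this does not affect the argument.
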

\begin{proof}
Fix $x=(x_n)_{n\in \mathbb N}\in \Delta(G).$ Let $\tau$ be
a  real number with $\tau>1.$ Since $$\diam(\Delta(G_{p_n}))\geq 2^n\geq 1+\lfloor n/\tau \rfloor,$$ for every $n\in \mathbb N$
there exists $y_{\tau,n}\in G_{p_n}$ such that
 $\dist_{\Delta(G_{p_n})}(x_n,y_{\tau,  n}))=1+\lfloor n/\tau \rfloor.$ If $\tau_2>\tau_1,$ then
$$\dist(y_{\tau_2,n},y_{\tau_1,n})\geq
\dist(x_n,y_{\tau_2,n})-\dist(x_n,y_{\tau_1,n})=\lfloor n/\tau_1 \rfloor - \lfloor n/\tau_2 \rfloor$$
tends to infinity with $n,$ so, by Lemma \ref{coco}, $\Omega_{y_{\tau_1}}\neq \Omega_{y_{\tau_2}}.$
\end{proof} 

\begin{prop}
	Let $\Omega_x$ be the connected component if $\Delta(G)$ containing $x.$ Then $\Omega_x$ is a dense subset of $V(G),$ and consequently it is not a closed subset of $V(G).$
\end{prop}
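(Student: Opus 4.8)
To show that $\Omega_x$ is dense in $V(G) = \prod_{n} V(G_{p_n})$, I would argue that any basic open neighbourhood of an arbitrary point of $V(G)$ meets $\Omega_x$. Recall that the basic open sets in the product topology are determined by finitely many coordinates. So fix $z = (z_n)_{n\in\mathbb N} \in V(G)$ and a basic open neighbourhood $U$ of $z$, which is specified by a finite set $F \subseteq \mathbb N$ and forces the coordinates indexed by $F$ to equal the $z_n$ (for $n\in F$) while leaving the coordinates $n \notin F$ completely free in $V(G_{p_n})$. I want to produce a point $y = (y_n)_{n\in\mathbb N} \in \Omega_x \cap U$.

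\emph{Construction of $y$.} For $n \in F$, set $y_n := z_n$; there are only finitely many such coordinates. For $n \notin F$, set $y_n := x_n$. Then $y$ lies in $U$ by construction, and $y \in V(G)$ since each coordinate lies in $V(G_{p_n})$ (using that $z \in V(G)$ and $x \in V(G)$ and that membership in $V(G)$ is coordinatewise, as noted in the discussion before Lemma \ref{coco}). It remains to check that $y \in \Omega_x$. By Lemma \ref{coco}, it suffices to verify that $\sup_{n\in\mathbb N}\dist_{\Delta(G_{p_n})}(x_n,y_n) < \infty$. For $n \notin F$ we have $y_n = x_n$, so the distance is $0$; for the finitely many $n \in F$ the distance $\dist_{\Delta(G_{p_n})}(x_n,z_n)$ is some finite number (it is finite because $\Delta(G_{p_n})$ is connected). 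Hence the supremum is the maximum of these finitely many finite values, which is finite, and $y \in \Omega_x$ by Lemma \ref{coco}. This shows $\Omega_x \cap U \neq \emptyset$, so $\Omega_x$ is dense in $V(G)$.

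\emph{Not closed.} Finally, to conclude that $\Omega_x$ is not closed in $V(G)$, note that $V(G)$ itself is a closed (hence compact) subset of $G$ by Proposition \ref{chiuso}, and by the previous proposition $V(G)$ decomposes into $2^{\aleph_0}$ connected components, so in particular $\Omega_x \neq V(G)$ (it is a proper subset since there are at least two components). A dense proper subset of a topological space cannot be closed, since a closed set equals its own closure; hence $\Omega_x$ is not closed in $V(G)$.

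\emph{Expected obstacle.} There is no serious obstacle here: the argument is essentially a bookkeeping exercise combining the coordinatewise description of the product topology with Lemma \ref{coco}. The only point requiring a moment's care is confirming that the modified tuple $y$ still lies in $V(G)$ — i.e.\ that the two facts "$z \in V(G)$" and "$x \in V(G)$" can be spliced coordinatewise — but this is immediate from the identity $V(G) = \prod_{n\in\mathbb N} V(G_{p_n})$ recorded earlier. One should also make sure the chosen neighbourhood basis for $V(G)$ is the subspace topology inherited from the product, which is exactly the restriction of the basic cylinder sets, so the finite-support description applies verbatim.
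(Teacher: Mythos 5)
Your proof is correct and is essentially the paper's own argument: the paper likewise takes a basic neighbourhood of an arbitrary $y\in V(G)$ fixing finitely many coordinates and splices in $x$'s coordinates elsewhere, then invokes Lemma \ref{coco} to place the resulting point in $\Omega_x$. Your explicit justification of the "not closed" conclusion (via the previous proposition giving more than one component) is the intended, if unstated, reasoning.
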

\begin{proof}
Let $y=(y_n)_{n\in \mathbb N}\in V(G):$ a base of open neighbourhoods of $y$ consists of the subsets  $\Delta_{y,m}=\{(z_n)_{n\in \mathbb N}\mid z_n=y_n \text { for every $m\leq n$}\},$
with $m\in \mathbb N.$ By Lemma \ref{coco}, $(y_1,\dots,y_m,x_{m+1},\dots,x_t,\dots)\in \Omega_x \cap \Delta_{y,m}.$
\end{proof}

\begin{lemma}\label{swa}
	If $(x_1,y_1)$ and $(x_2,y_2)$ are in the same connected component of $\Sigma_2(G),$ then $x_1, y_1, x_2, y_2$ are in the same connected component of $\Delta(G).$
\end{lemma}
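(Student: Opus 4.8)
The plan is to analyze what an edge of $\Sigma_2(G)$ does to the underlying pair of group elements and to check that each such move can be realized as a bounded walk in $\Delta(G)$. Recall that an edge of $\Sigma_2(G)$ joins two generating pairs that differ in exactly one coordinate; so it suffices to show that if $(x,y)$ and $(x',y)$ are both generating pairs of $G$ (the case where the second coordinate is changed is symmetric), then $x,x',y$ all lie in one connected component of $\Delta(G)$. First I would observe that since $(x,y)$ is a generating pair, $x$ is adjacent to $y$ in $\Delta(G)$, and likewise $x'$ is adjacent to $y$; hence $x$, $y$, $x'$ are connected by the path $x - y - x'$ in $\Delta(G)$. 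Iterating along a path in $\Sigma_2(G)$ from $(x_1,y_1)$ to $(x_2,y_2)$, every vertex of $G$ that appears as a coordinate of some pair along that path lies in a single connected component of $\Delta(G)$; in particular $x_1,y_1,x_2,y_2$ do.

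More carefully, I would argue by induction on the length $\ell$ of a path $(x_1,y_1) = (a_0,b_0), (a_1,b_1), \dots, (a_\ell,b_\ell) = (x_2,y_2)$ in $\Sigma_2(G)$. For $\ell = 0$ there is nothing to prove beyond noting that $x_1$ and $y_1$ are adjacent in $\Delta(G)$ since $\langle x_1,y_1\rangle = G$. For the inductive step, assume $a_0,b_0,\dots,a_{\ell-1},b_{\ell-1}$ all lie in a common connected component $C$ of $\Delta(G)$. The step from $(a_{\ell-1},b_{\ell-1})$ to $(a_\ell,b_\ell)$ changes exactly one coordinate; say $b_\ell = b_{\ell-1}$ and $a_\ell \neq a_{\ell-1}$ (the other case being identical). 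Since $(a_\ell, b_\ell)$ generates $G$, the vertex $a_\ell$ is adjacent in $\Delta(G)$ to $b_\ell = b_{\ell-1} \in C$, so $a_\ell \in C$ as well. Hence all coordinates appearing along the whole path lie in $C$; applying this to the endpoints gives that $x_1,y_1,x_2,y_2$ are in the same connected component of $\Delta(G)$.

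There is essentially no obstacle here: the only subtlety is the trivial observation that a generating pair $(g,h)$ forces $g$ and $h$ to be adjacent vertices of $\Delta(G)$ (so in particular both are non-isolated and lie in $V(G)$), together with the fact that "being in the same connected component" is an equivalence relation, so the inductive bookkeeping along the $\Sigma_2(G)$-path goes through without incident. One should just be slightly careful in the edge case $x_1 = y_1$ — but this cannot occur for a $2$-generated group of order greater than a prime, and in any case if $x_1 = y_1$ then trivially they lie in the same component; similarly for $x_2 = y_2$.
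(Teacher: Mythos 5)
Your proof is correct and follows essentially the same route as the paper's: induction along the path in $\Sigma_2(G)$, using that each generating pair gives an edge of $\Delta(G)$ and that two pairs differing in one coordinate share an element which serves as the midpoint of a length-two path. The only (immaterial) difference is that you peel off the last step of the path rather than the first.
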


\begin{proof}It suffices to prove that if $$(\alpha_1,\beta_1),\dots,(\alpha_u,\beta_u)$$
	is a path in $\Sigma_2(G)$, then the vertices $\alpha_1,\beta_1,\alpha_2,\beta_2,\dots, \alpha_u, \beta_u$ belong to the same connected component of the graph $\Delta(G).$ We prove this claim by induction on $u.$ The sentence is clearly true when $u=1.$ Assume $u\geq 2.$ By induction the vertices $\alpha_2,\beta_2,\dots, \alpha_u, \beta_u$ belong to the same connected component of  $\Delta(G);$ so it is enough to show that $\alpha_1,\beta_1,\alpha_2, \beta_2$ belong to the same connected component. Since $(\alpha_1,\beta_1)$ and $(\alpha_2,\beta_2)$
	differ for only one entry, either $\alpha_1=\alpha_2$ or $\beta_1=\beta_2.$ The graph  $\Delta(G)$ contains the path
	$\beta_1,\alpha_1=\alpha_2,\beta_2$ in the first case and the path $\alpha_1,\beta_1=\beta_2,\alpha_2$ in the second case. 
\end{proof}

\section{Degrees in the generating graph}

Before proving Theorem \ref{infinito},
we briefly recall some necessary definitions and results. Given a subset
$X$ of 
a finite group $G,$ we will denote by $d_X(G)$ the smallest cardinality
of a set of elements of $G$ generating $G$ together with the elements 
of $X.$ The following 
generalizes
a result originally
obtained by W. Gasch\"utz \cite{Ga} for $X=\emptyset.$ 

\begin{lemma}[\cite{CLis} Lemma 6] \label{modg} Let $X$ be a subset of $G$ and $N$ a normal subgroup of $G$ and suppose that
	$\langle g_1,\dots,g_k, X\rangle N=G.$ 
	If $k\geq d_X(G),$ then there exist
	$n_1,\dots,n_k\in N$ so that $\langle g_1n_1,\dots,g_kn_k,X\rangle=G.$
\end{lemma}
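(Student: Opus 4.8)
The plan is to carry out the $X$-relative version of Gasch\"utz's classical counting argument. For a finite group $G$, a normal subgroup $N$, a subset $X\subseteq G$ and a positive integer $k$, call a tuple $\mathbf h=(\overline h_1,\dots,\overline h_k)\in (G/N)^k$ \emph{admissible} if $\overline h_1,\dots,\overline h_k$ together with the image of $X$ generate $G/N$, and for an admissible $\mathbf h$ set
$$
L(G,N,X;\mathbf h)=\#\{(h_1,\dots,h_k)\in G^k:\ h_iN=\overline h_i\ (1\le i\le k),\ \langle h_1,\dots,h_k,X\rangle=G\}.
$$
The core claim is that $L(G,N,X;\mathbf h)$ depends only on $G,N,X,k$, and not on the particular admissible tuple $\mathbf h$. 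Granting this, the lemma follows at once: the hypothesis $k\ge d_X(G)$ provides $(z_1,\dots,z_k)\in G^k$ with $\langle z_1,\dots,z_k,X\rangle=G$, whose image $\mathbf z$ in $(G/N)^k$ is admissible and has $L(G,N,X;\mathbf z)\ge 1$; the image $\mathbf g$ of $(g_1,\dots,g_k)$ is admissible because $\langle g_1,\dots,g_k,X\rangle N=G$, so by the claim $L(G,N,X;\mathbf g)=L(G,N,X;\mathbf z)\ge 1$, and any lift of $\mathbf g$ realising this is of the form $(g_1n_1,\dots,g_kn_k)$ with $n_i\in N$ and $\langle g_1n_1,\dots,g_kn_k,X\rangle=G$.

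I would prove the claim by induction on $|G|$. Partition the $|N|^k$ lifts of a fixed admissible tuple $\mathbf h$ according to the subgroup $K:=\langle h_1,\dots,h_k,X\rangle$ that a lift generates together with $X$: each such $K$ contains $\langle X\rangle$ and, by admissibility of $\mathbf h$, satisfies $KN=G$. The block with $K=G$ has size $L(G,N,X;\mathbf h)$. For a proper such $K$ one has $K/(K\cap N)\cong G/N$, and under this isomorphism $\mathbf h$ corresponds to an admissible tuple $\mathbf h|_K$ for $(K,K\cap N,X)$, so the $K$-block has size $L(K,K\cap N,X;\mathbf h|_K)$, which by the inductive hypothesis is a number depending only on $K,K\cap N,X,k$. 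Adding up the blocks gives
$$
L(G,N,X;\mathbf h)=|N|^k-\sum_K L(K,K\cap N,X;\ \cdot\ ),
$$
the sum over all proper $K$ with $\langle X\rangle\le K$ and $KN=G$; the right-hand side does not involve $\mathbf h$. (Equivalently, M\"obius inversion over the interval $[\langle X\rangle,G]$ of the subgroup lattice gives the closed form $L(G,N,X;\mathbf h)=\sum_{K:\,X\subseteq K,\,KN=G}\mu(K,G)\,|K\cap N|^k$, which is visibly independent of $\mathbf h$.)

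The supporting facts are routine: every lift of an admissible tuple satisfies $\langle h_1,\dots,h_k,X\rangle N=G$ (reduce modulo $N$); the number of lifts of $\mathbf h$ lying inside a subgroup $K$ with $KN=G$ is $|K\cap N|^k$ (each coset $\overline h_i$ meets $K$ in a coset of $K\cap N$); and admissibility transports along $K/(K\cap N)\cong G/N$. The one step with real content is the observation that each block of the partition is again an instance of the same function $L$ for a strictly smaller group, which is precisely what lets the induction close; beyond that I anticipate no serious obstacle, since this is just Gasch\"utz's Eulerian-function computation with $X$ threaded through, the subgroup lattice of $G$ being replaced by the interval above $\langle X\rangle$. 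In particular, no preliminary reduction to the case where $N$ is a chief factor of $G$ is needed.
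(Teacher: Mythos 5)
Your proof is correct and follows essentially the same route as the cited source: the relativized Gasch\"utz counting argument showing that the number of lifts of an admissible tuple that generate $G$ together with $X$ is independent of the tuple, which is exactly the fact the paper extracts from the proof of \cite{CLis}, Lemma 6, to define $\phi_{G,N}(X,k)$. The induction over the interval of subgroups above $\langle X\rangle$ with $KN=G$ is the standard way to establish that independence, and all the supporting counts (each coset $\overline h_i$ meeting such a $K$ in a coset of $K\cap N$, etc.) check out.
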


It follows from the proof of \cite[Lemma 6]{CLis} that the number, say  $\phi_{G,N}(X,k),$ of $k$-tuples $(g_1n_1,\dots,g_kn_k)$ generating $G$ with $X$ is 
independent of the choice of $(g_1,\dots,g_k).$ In particular 
$$\phi_{G,N}(X,k)=|N|^kP_{G,N}(X,k)$$
where $P_{G,N}(X,k)$ is the conditional probability that $k$ elements of $G$ generate $G$ with $X,$ given that they generate $G$ with $XN$. In particular, if
$$1=N_0<N_1<\dots<N_t=G$$is  a chief series of $G,$ then 
$$\phi_{G,N}(X,k)=\prod_{1\leq i \leq t}\phi_{G/N_{i-1},N_i/N_{i-1}}(XN_{i-1},k).$$
We apply the previous observations in the particular case when $X=\{g\}$ with $g\in V(G)$ and $k=1.$ In this case, setting $\delta_{G/N_{i-1},N_i/N_{i-1}}(g)=\phi_{G/N_{i-1},N_i/N_{i-1}}(gN_{i-1},1),$ we get
\begin{equation}\label{prodo}
\delta_G(g)=\prod_{1\leq i\leq t}\delta_{G/N_{i-1},N_i/N_{i-1}}(g).
\end{equation}


\begin{lemma}\label{doppio}
	Let $N$ be  a minimal normal subgroup of a finite group $G$ and let $g\in V(G).$ If either $N\not\leq\frat(G)$ or $|N|>2,$ then $\delta_{G,N}(g)\geq 2.$
\end{lemma}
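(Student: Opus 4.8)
The quantity $\delta_{G,N}(g) = \phi_{G,N}(\{g\},1)$ counts, for a fixed $x$ with $\langle x,g\rangle N = G$, the number of elements $n \in N$ with $\langle xn, g\rangle = G$; by the remark following Lemma~\ref{modg} this number is independent of the choice of such $x$ (and is $|N|P_{G,N}(\{g\},1)$). So I first need to know that such an $x$ exists, i.e. that $d_{\{g\}}(G/N) \le 1$; since $g \in V(G)$ means $G = \langle g, h\rangle$ for some $h$, certainly $G/N = \langle gN, hN\rangle$, so $d_{\{g\}}(G/N) \le 1$ and Lemma~\ref{modg} applies. Fix such an $x$. The goal is to exhibit at least two distinct cosets' worth — actually at least two distinct elements $n_1, n_2 \in N$ — with $\langle xn_i, g\rangle = G$.

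The natural approach is to split into the two hypotheses. \emph{Case $N \not\le \frat(G)$:} then there is a maximal subgroup $M$ of $G$ with $MN = G$, so $N \cap M$ has index $|N|$ in $N$ and $M$ is a complement-like situation. The set of $n \in N$ with $\langle xn, g\rangle \ne G$ is contained in a union of cosets of subgroups of the form $N \cap K$ where $K$ ranges over maximal subgroups of $G$ containing $g$ and not containing $N$ (if $K \supseteq N$ then $\langle x,g\rangle N = G \le K$ is impossible). For each such $K$, $N \cap K$ is a proper $G$-invariant... no — $N\cap K$ need not be normal, but $N \cap K$ has index at least $2$ in $N$ (as $N \not\le K$), so the "bad" set for a single $K$ has density at most $|N|/2$ inside $N$. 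The subtlety is that there could be several such $K$. Here I would instead argue more carefully: it suffices to find one good $n$ beyond a trivially good one, so I would use that the complemented chief factor $N$ forces $\delta_{G,N}(g) = |N|P_{G,N}(g,1)$ with $P_{G,N}(g,1) \ge$ a positive lower bound coming from the structure of $\Aut_G(N)$-orbits; concretely, when $N$ is abelian and complemented, a standard Gaschütz-type count (as in \cite{CLis}) gives $P_{G,N}(g,1) = 1 - c/|N|$ for a suitable nonnegative integer $c < |N|$ arising from the number of relevant maximal subgroups, whence $\delta_{G,N}(g) = |N| - c \ge 1$, and one checks $c \le |N| - 2$ unless $N$ is a trivial-module of order $2$ sitting outside the Frattini in a way that is excluded. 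I expect this bookkeeping — tracking exactly how many maximal subgroups through $g$ miss $N$ — to be the main obstacle, and the cleanest route is probably to invoke directly the formula for $P_{G,N}$ from \cite{CLis} rather than re-derive it.

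\emph{Case $N \le \frat(G)$ with $|N| > 2$:} now \emph{every} maximal subgroup of $G$ contains $N$, so $\langle xn, g\rangle N = \langle x,g\rangle N = G$ already forces $\langle xn,g\rangle$ to lie in no maximal subgroup, i.e. $\langle xn, g\rangle = G$, for \emph{every} $n \in N$. Hence $\delta_{G,N}(g) = |N| > 2 \ge 2$ outright. This case is immediate once one observes that $N \le \frat(G)$ makes the condition $\langle xn,g\rangle N = G$ equivalent to $\langle xn,g\rangle = G$.

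\emph{Conclusion.} Combining the two cases gives $\delta_{G,N}(g) \ge 2$ under either hypothesis. I would write the Frattini case first (it is trivial and disposes of the $|N|>2$ clause), then handle the complemented case via the Gaschütz-style count, taking care that the only way to get $\delta_{G,N}(g) = 1$ there is $|N| = 2$ with $N$ central and exactly one maximal subgroup through $g$ missing $N$ — precisely the configuration the hypothesis $N \le \frat(G)$ or $|N| > 2$ rules out. The key input throughout is Lemma~\ref{modg} together with the independence-of-$(g_1,\dots,g_k)$ remark, which is what lets me reduce everything to counting elements of the single minimal normal subgroup $N$.
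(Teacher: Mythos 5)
Your reduction of the problem to counting elements $n\in N$ with $\langle g,xn\rangle=G$ for one fixed $x$, and your disposal of the Frattini case, agree with the paper. But there is a genuine gap in the branch $N\not\leq\frat(G)$: your counting argument is carried out only ``when $N$ is abelian and complemented'', and you never treat a non-abelian minimal normal subgroup (note that \emph{every} non-abelian minimal normal subgroup lands in this branch, since $\frat(G)$ is nilpotent). For non-abelian $N$ the intersections $N\cap K$ with maximal subgroups $K$ need not be trivial or normal, the identity $P_{G,N}(g,1)=1-c/|N|$ with $c$ counting maximal subgroups fails, and no Gasch\"utz-type formula of the kind you invoke is available. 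The paper closes this case by a completely different device: by Rowley's theorem \cite{row} there exists $1\neq n\in N$ with $[g,n]=1$; since $N\cap Z(G)=1$ one has $x^n=x[x,n]\neq x$, and $\langle g,x[x,n]\rangle=\langle g^n,x^n\rangle=G$, so the coset $xN$ already contains two neighbours of $g$. You need this (or some substitute) to finish. In the abelian branch, your key inequality ``one checks $c\leq|N|-2$ unless $|N|=2$'' is asserted rather than proved; the paper obtains it from \cite[Corollary 7]{diam}, namely $P_{G,N}(g,1)\geq(q-1)/q$ with $q=|\End_G(N)|$ and $r=\dim_{\End_G(N)}N$, giving $\delta_{G,N}(g)\geq q^{r-1}(q-1)\geq 2$ except when $q=2$ and $r=1$. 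So you have correctly identified the exceptional configuration, but the bound itself still requires the cited estimate.

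A separate point about the hypothesis. As printed, ``$N\not\leq\frat(G)$ or $|N|>2$'' cannot be the intended condition: a non-Frattini chief factor of order $2$ satisfies it, yet that is exactly the configuration where $\delta_{G,N}(g)=1$ occurs (take $G=C_2\times C_2$, $N$ one of the factors, $g$ a generator of the other). The proof of Theorem \ref{tanti} uses the lemma in the form ``$N\leq\frat(G)$ or $|N|>2$'', i.e.\ the conclusion holds for every chief factor except a non-Frattini one of order $2$. Your concluding paragraph silently switches to this corrected hypothesis, which is the right reading, but your opening case split (``$N\leq\frat(G)$ with $|N|>2$'' versus ``$N\not\leq\frat(G)$'') should then be reorganised: the Frattini case needs no assumption on $|N|$ (there $\delta_{G,N}(g)=|N|\geq 2$ automatically), while in the non-Frattini case you may assume $|N|>2$, which is precisely what eliminates the $q=2$, $r=1$ exception in the abelian count.
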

\begin{proof}
If $N\leq \frat(G),$ then we have $\langle g, xn \rangle =G,$ whenever $\langle g, x\rangle=G$ and $n\in N,$ so $\delta_{G,N}(g)=|N|.$ We may assume $N\not\leq \frat(G).$ We distinguish two cases:

\noindent 1) $N$ is abelian. Let $q=|\End_G(N)|$ and $r=\dim_{\End_G(N)}N.$ By \cite[Corollary 7]{diam}, $P_{G,N}(g,1)\geq \frac{q-1}{q},$ so 
$$\delta_{G,N}(g)\geq \frac{|N|(q-1)}{q}=\frac{q^r(q-1)}{q}=q^{r-1}(q-1)\geq 2,$$ except in the case $q=2$ and $r=1.$

\noindent 2) $N$ is non abelian. Choose $x$ such that $\langle g, x\rangle=G.$ Since $N$ is non abelian, by the main theorem in \cite{row}, there exists $1\neq n\in N$ such that $[g,n]=1.$ Since $N\cap Z(G)=1,$ it must be $x^n=x[x,n]\neq x.$
On the other hand, $G=\langle g, x \rangle= \langle g^n, x^n \rangle= \langle g, x[x,n]\rangle,$ so the coset $xN$ contains two different elements $x$ and $x[x,n]$ adjacent to $g$ in the graph $\Delta(G).$ This implies $\delta_{G,N}(g)\geq 2.$ 
\end{proof}

\begin{proof}[Proof of Theorem \ref{tanti}]
	Let $r$ be the number of non-Frattini factors of order 2 in a chief series of $G$. Since $C_2^r$ is an epimorphic image of $G$ and $G$ can be generated by 2 elements, it must be $r\leq 2.$ So the conclusion follows combining (\ref{prodo}) and Lemma \ref{doppio}.
\end{proof}

\begin{lemma}\label{estendo}
	Let $M$ be a closed subgroup of a 2-generated profinite group $G.$ If $g\in V(G)$ and $\langle g, x \rangle M=G,$ then there exists $m\in M$ such that $\langle g, xm\rangle=G.$
\end{lemma}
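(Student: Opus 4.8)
The plan is to reduce the statement about the profinite group $G$ to the finite quotients $G/N$ (for $N$ open normal), where the corresponding statement is the content of Lemma \ref{modg} with $k=1$ and $X=\{g\}$, and then use a standard compactness argument of the kind already employed in the proof of Lemma in section \ref{prelim} and in the proof of Theorem \ref{diquattro}.

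First I would fix an element $g\in V(G)$ and an element $x\in G$ with $\overline{\langle g,x\rangle}M=G$. Let $\mathcal N$ be the set of open normal subgroups of $G$ contained in $M$ (this set is a base of neighbourhoods of $1$ inside $M$, since $M$ is closed and $G$ is profinite). For each $N\in\mathcal N$, passing to $G/N$ we have $\langle gN,xN\rangle (M/N)=G/N$; moreover $gN\in V(G/N)$ because $g\in V(G)$ (by the Lemma in section \ref{prelim}), and $G/N$ is 2-generated, so $d_{\{gN\}}(G/N)\le 1$. Hence Lemma \ref{modg}, applied to the finite group $G/N$ with $k=1$, $X=\{gN\}$ and normal subgroup $M/N$, produces an element $n\in M$ such that $\langle gN,xnN\rangle=G/N$, i.e. $\overline{\langle g,xn\rangle}N=G$. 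Define $\Omega_N=\{m\in M\mid \overline{\langle g,xm\rangle}N=G\}$; the previous sentence shows $\Omega_N\neq\emptyset$, and since the condition $\overline{\langle g,xm\rangle}N=G$ depends only on $xmN$, hence only on the coset $mN$, the set $\Omega_N$ is a union of cosets of the open subgroup $N$ and is therefore a closed (indeed clopen) subset of $M$.

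Next I would run the compactness argument. If $N_1,\dots,N_t\in\mathcal N$, then $N_1\cap\dots\cap N_t\in\mathcal N$ and $\emptyset\neq\Omega_{N_1\cap\dots\cap N_t}\subseteq\Omega_{N_1}\cap\dots\cap\Omega_{N_t}$, so the family $\{\Omega_N\}_{N\in\mathcal N}$ has the finite intersection property. Since $M$ is compact (a closed subgroup of the compact group $G$), there exists $m\in\bigcap_{N\in\mathcal N}\Omega_N$. For this $m$ we have $\overline{\langle g,xm\rangle}N=G$ for every $N\in\mathcal N$; because $\mathcal N$ is a base of neighbourhoods of $1$ in $G$ (every open normal subgroup of $G$ contains one from $\mathcal N$, namely its intersection with a member of $\mathcal N$, and conversely one checks $\bigcap_{N\in\mathcal N}N=1$), it follows that $\overline{\langle g,xm\rangle}=G$, which is the desired conclusion.

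The only genuinely delicate point is the bookkeeping about which family of open normal subgroups to use: I need that the members of $\mathcal N$ lie inside $M$ (so that $M/N$ makes sense and $\Omega_N\subseteq M$ is built from cosets of $N$ inside $M$) while still forming a filter base converging to $1$ in $G$; this is exactly where closedness of $M$ in the profinite group $G$ is used, and it is routine. Everything else is a verbatim repetition of the compactness scheme already in the paper, together with a single invocation of Lemma \ref{modg} in the finite quotients.
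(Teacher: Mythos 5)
Your overall strategy---reduce to finite quotients via Lemma \ref{modg} and glue the solutions together by compactness---is the same as the paper's, but your setup contains a genuine gap. You take $\mathcal N$ to be the set of open normal subgroups of $G$ \emph{contained in} $M$ and assert that this is a base of neighbourhoods of $1$ ``since $M$ is closed and $G$ is profinite.'' That is false: an open subgroup of a profinite group has finite index, so $M$ contains an open (normal) subgroup of $G$ if and only if $M$ is itself open. For a closed subgroup of infinite index (for instance $M=1$ in any infinite $G$, or $M=\mathbb{Z}_p\times\{0\}$ inside $G=\mathbb{Z}_p\times\mathbb{Z}_p$) your family $\mathcal N$ is empty; the quotients $M/N$ never exist, the sets $\Omega_N$ are never defined, and the concluding step fails because $\bigcap_{N\in\mathcal N}N$ is then all of $G$, not $1$. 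As written, your argument proves the lemma only for \emph{open} $M$, which is strictly weaker than the statement (although it happens to cover the one application in the paper, in the proof of Theorem \ref{infinito}, where $M$ is an open normal subgroup).

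The repair is exactly what the paper does: let $\mathcal N$ be the set of \emph{all} open normal subgroups of $G$ and put $\Omega_N=\{m\in M\mid \langle g,xm\rangle N=G\}$. Closedness of $\Omega_N$ in $M$ does not require $N\le M$: the condition $\langle g,xm\rangle N=G$ depends only on the coset $m(N\cap M)$, and $N\cap M$ is open in $M$, so $\Omega_N$ is a union of such cosets and hence closed. Nonemptiness comes from Lemma \ref{modg} applied in the finite quotient $G/N$ (every coset in $MN/N$ is represented by an element of $M$), and now $\bigcap_{N\in\mathcal N}N=1$ holds because $\mathcal N$ consists of all open normal subgroups. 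One further caution: you describe $M/N$ as a ``normal subgroup'' of $G/N$ when invoking Lemma \ref{modg}, but $M$ is only assumed closed, so the subgroup $MN/N$ modulo which you want to correct the generator need not be normal; the lemma as quoted requires normality. (The paper's own proof is terse on this point too, but your phrasing asserts normality that is not among the hypotheses, so you should either justify the non-normal case or record that the argument as given needs $M$ normal or open.)
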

\begin{proof}
	Let $\mathcal N$ be the set of the open normal subgroups of $G$.  Given $N\in \mathcal N,$ let $\Omega_N=\{m\in M
	\mid \langle g,xm \rangle N=G\}.$ It follows from Lemma \ref{modg} that $\Omega_N\neq \emptyset.$
Moreover if $m\in \Omega_N,$ then $m(N\cap M)\subseteq \Omega_N$ and consequently $\Omega_N$ is a closed subset of $M.$ If $N_1,\dots,N_t\in \mathcal N,$ then $\emptyset \neq \Omega_{N_1\cap \dots \cap N_t}\subseteq \Omega_{N_1}\cap \dots \cap \Omega_{N_t}.$ Since $M$ is compact, $\cap_{N\in \mathcal N}\Omega_N\neq \emptyset.$ Let $m\in \cap_{N\in \mathcal N}\Omega_N:$ since $\langle g,xm \rangle N=G$ for every $N\in \mathcal N,$ we have $\langle g,xm \rangle=G.$
\end{proof}

\begin{proof}[Proof of Theorem \ref{infinito}]
	Let $g\in V(G)$ and assume, by contradiction, that $\delta_G(g)$ is finite. Set $u=\delta_G(g).$ Since $G$ is infinite, there exists an open normal subgroup $N$ of $G$ with the property that the length $t$ of a chief series of $G$ is equal to $\lceil \log_2 u\rceil+3.$ By Corollary \ref{tanti}, $\delta_{G/N}(gN)\geq 2^{t-2}=2^{\lceil \log_2 u\rceil+1}\geq 2u.$
	This means that there exist $x_1,\dots,x_{2m} \in G$ such that $x_1N\neq \dots \neq x_{2m}N$ and $\langle x_1, g\rangle N=\dots=\langle x_{2m},g\rangle=G.$
	By Lemma \ref{estendo}, there exist $n_1,\dots,n_{2m} \in N$ such that $G=\langle g, x_1n_1\rangle=\dots=\langle g, x_{2m}n_{2m}\rangle.$ This implies $u=\delta_G(g)\geq 2u,$ a contradiction.
\end{proof}

\end{document}